\newcommand{\la}{\lambda}
\newcommand{\al}{\alpha}
\newcommand{\si}{\sigma}
\newcommand{\Z}{\mathbb{Z}}
\newcommand{\C}{\mathbb{C}}
\newcommand{\Sb}{\mathbb{S}}
\newcommand{\Yb}{\mathbb{Y}}
\newcommand{\R}{\mathbb{R}}
\newcommand{\Ply}{\mathrm{Pl}}
\newcommand{\Pls}{\mathbb{Pl}}
\newcommand{\xy}{\mathrm{x}}
\newcommand{\yy}{\mathrm{y}}
\newcommand{\xs}{\mathbb{x}}
\newcommand{\ys}{\mathbb{y}}
\newcommand{\My}{\mathrm{M}}
\newcommand{\Ms}{\mathbb{M}}
\newcommand{\Dc}{\mathcal{D}}
\newcommand{\Rc}{\mathcal{R}}
\newcommand{\prob}{\mathop{\mathsf{Prob}}}
\renewcommand{\i}{\mathsf{i}}
\renewcommand{\j}{\mathsf{j}}
\renewcommand{\c}{\mathsf{c}}
\newcommand{\da}{\downarrow}
\newcommand{\Da}{\Downarrow}
\newcommand{\ua}{\uparrow}
\newcommand{\Ua}{\Uparrow}
\newtheorem{prop}{Proposition}
\newtheorem{lemma}[prop]{Lemma}
\theoremstyle{definition}
\begin{document}
\title[]{On Measures on Partitions Arising in\\ Harmonic Analysis for Linear and Projective Characters of the Infinite Symmetric Group}
\author{Leonid Petrov}
\thanks{The author was partially supported by the RFBR-CNRS grant  10-01-93114 and by the Dynasty foundation fellowship for young scientists}
\address{Dobrushin Mathematics Laboratory,
Kharkevich Institute for Information Transmission Problems,
Bolshoy Karetny per.~19, Moscow, 127994, Russia}
\email{lenia.petrov@gmail.com}

\begin{abstract}  
  The $z$-measures on partitions originated from the problem of harmonic analysis of linear representations of the infinite symmetric group in the works of Kerov, Olshanski and Vershik \cite{Kerov1993}, \cite{Kerov2004}. A similar family corresponding to projective representations was introduced by Borodin \cite{Borodin1997}. The latter measures live on strict partitions (i.e., partitions with distinct parts), and the $z$-measures are supported by all partitions. In this note we describe some combinatorial relations between these two families of measures using the well-known doubling of shifted Young diagrams.
\end{abstract}

\maketitle

\section{Ordinary and strict partitions}

A \emph{partition} is an integer sequence of the form $\rho=(\rho_1\ge\ldots\ge\rho_{\ell(\rho)},0,0,\dots)$, where each $\rho_i>0$ and only finitely many of them are nonzero. A partition is called \emph{strict} if all its nonzero parts are distinct. Strict partitions are denoted by $\la,\mu,\dots$. Partitions which are not necessary strict will be called \emph{ordinary} and denoted by $\rho,\si,\dots$. We denote $|\rho|:=\rho_1+\dots+\rho_{\ell(\rho)}$, this is the \emph{weight} of a partition. Set $\Yb_n:=\{\rho\colon|\rho|=n\}$, $\Sb_n:=\{\la\colon\mbox{$\la$ strict and $|\la|=n$}\}$, $n=0,1,2,\dots$ (by agreement, $\Yb_0=\Sb_0=\{\varnothing\}$).

We identify ordinary and strict partitions with corresponding \emph{ordinary} and \emph{shifted Young diagrams}, respectively \cite[I.1]{Macdonald1995}. For example: 
\begin{equation}
  \rho=(4,4,1)\longleftrightarrow
  \begin{array}{|c|c|c|c|}
    \hline
    \ &\ &\ &\ \\
    \hline 
    \ &\ &\ &\ \\
    \cline{1-4}
    \ \\
    \cline{1-1}
  \end{array}
  \qquad\qquad
  \la=(5,3,2)\longleftrightarrow
  \begin{array}{|c|c|c|c|c|}
    \hline
    \ &\ &\ &\ &\ \\
    \hline 
    \multicolumn{1}{c|}\ &\ &\ &\ \\
    \cline{2-4}
    \multicolumn{2}{c|}\ &\ &\ \\
    \cline{3-4}
  \end{array}
  \label{YS_diagrams}
\end{equation}
For any box $\square$ in an ordinary of shifted Young diagram, by $\i(\square)$ and $\j(\square)$ we denote its row and column numbers, respectively. Also $\c(\square):=\j(\square)-\i(\square)$ is the \emph{content} of the box. Clearly, the content of any box in a shifted Young diagram is nonnegative.

If we have $|\rho|=|\si|+1$ and $\si\subset\rho$ for ordinary Young diagrams $\si$, $\rho$ (i.e., $\rho$ is obtained from $\si$ by adding a box), then we write $\si\nearrow\rho$, or, equivalently, $\rho\searrow\si$. In a similar situation for shifted diagrams $\mu$, $\la$ we write $\mu\Nearrow\la$ or $\la\Searrow\mu$. 

The \emph{Young graph} $\Yb=\bigsqcup_{n=0}^\infty\Yb_n$ consists of all ordinary Young diagrams, and we connect $\si\in\Yb_{n-1}$ and $\rho\in\Yb_n$ by an edge iff $\si\nearrow\rho$. This is a graded graph which describes the branching of irreducible representations of the symmetric groups $\mathfrak{S}(n)$, see \cite[I.7]{Macdonald1995} or \cite{Vershik_Okounkov1996}. The \emph{Schur graph} $\Sb=\bigsqcup_{n=0}^\infty\Sb_n$ is defined in the same manner for shifted Young diagrams. This graded graph describes the branching of (suitably normalized) irreducible truly projective characters of the symmetric groups $\mathfrak{S}(n)$ \cite{Hoffman1992}, \cite{IvanovNewYork3517-3530}.

For $\rho\in\Yb$, by $f_\rho$ denote the number of paths in $\Yb$ from the initial vertex $\varnothing$ to the diagram $\rho$. The number of paths in the Schur graph from $\varnothing$ to $\la\in\Sb$ is denoted by $g_\la$. There are explicit formulas for $f_\rho$ and $g_\la$ \cite[I.5, III.8]{Macdonald1995}.

\section{Coherent systems of measures}

\subsection{Young graph}
\emph{Down transition probabilities} on the Young graph are
\begin{equation}\label{down_Y}
  p^\da(\rho,\si):=\begin{cases}
    f_\si/f_\rho,&\mbox{if $\si\nearrow\rho$},\\
    0,&\mbox{otherwise}.
  \end{cases}
\end{equation}
One sees that $p^\da$ give rise to a Markov transition kernel from $\Yb_n$ to $\Yb_{n-1}$ (for any $n=1,2,\dots$), i.e., to a random procedure of deleting a box from an ordinary Young diagram. 

A sequence of probability measures $\{\My_n\}$ on $\Yb_n$ is called \emph{coherent} iff $\My_n$ is compatible with the down transition kernel $p^\da$: $\My_n\circ p^\da=\My_{n-1}$, or, in more detail,
\begin{equation*}
  \sum_{\rho\colon\rho\searrow\si} 
  \My_n(\rho)p^\da(\rho,\si)=\My_{n-1}(\si)
  \quad\mbox{for all $n=1,2,\dots$, and $\si\in\Yb_{n-1}$}.
\end{equation*}
We assume that our coherent systems are \emph{nondegenerate}, i.e., each $\My_n$ is supported by the whole $\Yb_n$. Having a nondegenerate coherent system $\{\My_n\}$, one can define the corresponding up transition kernel from $\Yb_n$ to $\Yb_{n+1}$ (for any $n=0,1,\dots$):
\begin{equation*}
  p^\ua(\si,\rho):=\begin{cases}
    p^\da(\rho,\si)\My_{n+1}(\rho)/\My_n(\si),&
    \mbox{if $\si\nearrow\rho$ and $|\si|=n$},\\
    0,&\mbox{otherwise}.
  \end{cases}
\end{equation*}
The up transition probabilities depend on the choice of a coherent system $\{\My_n\}$, and they define it uniquely. Moreover, $\My_n\circ p^\ua=\My_{n+1}$ for all $n$. In this way, $p^\ua$ define a random procedure of adding a box to a Young diagram. Iterating this procedure, one can think of a process of random growth of a diagram (by adding one box at a time) which starts from $\varnothing$. Then $\My_n$ is the distribution of a Young diagram after adding $n$ boxes. It is known (e.g., see \cite{vershik1987locally}) that linear characters of the infinite symmetric group $\mathfrak{S}(\infty)$ are in one-to-one correspondence with coherent systems on the Young graph.

The well-known Plancherel measures on ordinary partitions $\Ply_n(\rho)={f_\rho^2}/{n!}$ ($\rho\in\Yb_n$) form a distinguished coherent system $\{\Ply_n\}$ on the Young graph. It has the up transition probabilities $p^\ua_{\Ply}(\si,\rho)=\frac{f_\rho}{(|\si|+1)f_\si}$ ($\si\nearrow\rho$).

The problem of harmonic analysis on the infinite symmetric group \cite{Kerov1993}, \cite{Kerov2004} leads to a deformation $\{\My_n^{z,z'}\}$ of the Plancherel measures $\Ply_n$ depending on two complex parameters $z$ and $z'$ subject to the following constraints: 

$\bullet$ either $z'=\bar z$ and $z\in\C\setminus\Z$,

$\bullet$ or $z,z'\in\R$ and $m<z,z'<m+1$ for some $m\in\Z$.

\par\noindent
The system of deformed measures $\{\My_n^{z,z'}\}$ (they are called the \emph{$z$-measures}) is also coherent, and its up transition probabilities have the form (e.g., see \cite{Kerov2000}):
\begin{equation}\label{up_Z}
  p^\ua_{z,z'}(\si,\rho)=
  \frac{(z+\c(\square))(z'+\c(\square))}{zz'+|\si|}
  p^\ua_{\Ply}(\si,\rho),\qquad\si\nearrow\rho,\quad   
  \square=\rho\setminus\si.
\end{equation}
The $z$-measures $\{\My_n^{z,z'}\}$ is a remarkable object, they were studied in great detail by Borodin, Olshanski, Okounkov, and other authors.

\subsection{Schur graph}
General concepts explained above in the case of the Young graph work in the same way for the Schur graph. The down transition probabilities here are denoted by $p^\Da$, they are defined as in (\ref{down_Y}) using the quantities $g_\la$. Coherent systems of measures on the Schur graph correspond to (truly) projective characters of $\mathfrak{S}(\infty)$ (e.g., see \cite{nazarov1992factor}, \cite{IvanovNewYork3517-3530} and a general formalism of \cite{vershik1987locally}).

There are also Plancherel measures on strict partitions $\Pls_n(\la)=2^{n-\ell(\la)}g_\la^2/n!$ (here $\la\in\Sb_n$ and $\ell(\la)$ is the number of rows in the shifted diagram $\la$), they form a distinguished coherent system on $\Sb$. The corresponding up transition probabilities are
$p^\Ua_{\Pls}(\mu,\la)=\frac{g_\la}{(|\mu|+1)g_\mu} 2^{\ell(\mu)-\ell(\la)+1}$ ($\mu\Nearrow\la$). A deformation $\{\Ms_n^\al\}$ of the Plancherel measures $\Pls_n$  depending on one parameter $\al>0$ was introduced in \cite{Borodin1997}. The measures $\Ms_n^\al$ form a coherent system which can be described in terms of its up transition probabilities:
\begin{equation}\label{up_mult}
  p^\Ua_\al(\mu,\la)=\frac{\c(\square)
  \cdot(\c(\square)+1)+\al}{2|\mu|+\al}
  p^\Ua_{\Pls}(\mu,\la),
  \qquad\mu\Nearrow\la,\quad
  \square=\la\setminus\mu.
\end{equation}

The Plancherel measures on Young and Schur graphs admit a unified combinatorial description which can be read from, e.g., \cite{fomin1994duality}. The measures $\{\Ms_n^\al\}$ on the Schur graph do not have a representation-theoretic interpretation in the spirit of  \cite{Kerov1993}, \cite{Kerov2004} yet. However, combinatorially they look very similar to the $z$-measures: the families $\My_n^{z,z'}$ and $\Ms_n^\al$ can be characterized in a unified manner, see \cite{rozhkovskaya1997multiplicative}, \cite{Borodin1997}; see also \cite[\S4.1]{Petrov2010Pfaffian} for another characterization. On the other hand, most results about $\Ms_n^\al$ do not follow directly from the corresponding results about the $z$-measures. In this paper we aim to describe certain \emph{direct} combinatorial relations between $\My_n^{z,z'}$ and $\Ms_n^\al$, and, more general, between the Young and the Schur graphs. There are also other aspects in which $\My_n^{z,z'}$ and $\Ms_n^\al$ are directly related, e.g., at the level of correlation kernels of corresponding random point processes, see \cite[(7.17), (8.3)]{Petrov2010Pfaffian}, and \cite[Remark 6]{Petrov2010}.

\section{Doubling of shifted Young diagrams\\ and down transition probabilities}

Let $\la=(\la_1,\dots,\la_\ell)$ be a shifted Young diagram. By $\Dc\la$ let us denote its \emph{doubling}, i.e., the ordinary Young diagram with $2|\la|$ boxes which has Frobenius coordinates $(\la_1,\dots,\la_\ell\mid\la_1-1,\dots,\la_\ell-1)$ \cite[I.1]{Macdonald1995}. E.g., for $\la=(4,2)$ we have
\begin{equation*}
  \Dc\la\;=\ \begin{array}{|c|c|c|c|c|}
    \hline
    \ &. &. &. &. \\
    \hline 
    \ &\ &. &. \\
    \cline{1-4}
    \ &\ \\
    \cline{1-2}
    \ \\
    \cline{1-1}
  \end{array}
\end{equation*}
(the original shifted diagram is marked).
In this way, $\Dc$ defines an embedding of $\Sb$ into $\Yb$. Any ordinary Young diagram of the form $\Dc\la$ will be called \emph{$\Dc$-symmetric}. A finite path in the Young graph $\varnothing\nearrow\rho^{(1)}\nearrow\dots\nearrow\rho^{(k)}$ is called a \emph{$\Dc$-path} iff each Young diagram $\rho^{(2m)}$ is $\Dc$-symmetric. The next statement is straightforward:
\begin{lemma}\label{lemma:number_of_rho}
  Let $\mu\Nearrow\la$ be two shifted Young diagrams. If $\ell(\la)=\ell(\mu)$, there exist two ordinary diagrams $\rho^{(a),(b)}$ such that $\Dc\mu\nearrow\rho^{(a),(b)}\nearrow\Dc\la$. If $\ell(\la)=\ell(\mu)+1$, there is only one such ordinary diagram $\rho$. Consequently, for any $\la\in\Sb$ the number of $\Dc$-paths in $\Yb$ from $\varnothing$ to $\Dc\la$ is $2^{|\la|-\ell(\la)}g_\la$.
\end{lemma}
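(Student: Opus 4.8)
The plan is to analyze the doubling map locally: understand exactly which boxes of $\Dc\la$ are \emph{not} in $\Dc\mu$ when $\mu\Nearrow\la$, and how the diagonal boxes behave. Recall that in Frobenius notation $\Dc\la$ has coordinates $(\la_1,\dots,\la_\ell\mid\la_1-1,\dots,\la_\ell-1)$, so the main diagonal of $\Dc\la$ consists of exactly $\ell=\ell(\la)$ boxes. The $i$-th Frobenius arm–leg pair $(\la_i\mid\la_i-1)$ contributes $\la_i+(\la_i-1)+1=2\la_i$ boxes to row and column $i$ of $\Dc\la$, matching $|\Dc\la|=2|\la|$. Passing from $\mu$ to $\la$ by $\mu\Nearrow\la$ means either (case A) some part $\mu_i$ increases by $1$ with $\ell(\la)=\ell(\mu)$, or (case B) a new part equal to $1$ is appended at the bottom, with $\ell(\la)=\ell(\mu)+1$.

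First I would treat case A. If $\mu_i\to\mu_i+1=\la_i$, the $i$-th Frobenius pair changes from $(\mu_i\mid\mu_i-1)$ to $(\mu_i+1\mid\mu_i)$, which adds one box at the end of row $i$ and one box at the end of column $i$ of the doubled diagram — two boxes, symmetric across the diagonal, neither of them on the diagonal itself (since $\la$ stays strict, these two boxes are genuinely distinct and the shape remains a valid Young diagram). So $\Dc\mu\subset\Dc\la$ with $|\Dc\la|=|\Dc\mu|+2$ and the two added boxes are non-adjacent mirror images. One can add them in either order, and in each order the intermediate diagram is a legitimate ordinary Young diagram; moreover neither intermediate is $\Dc$-symmetric (it has one of the two mirror boxes but not the other). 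This gives exactly the two diagrams $\rho^{(a)},\rho^{(b)}$ claimed, and shows there are no others: any $\rho$ with $\Dc\mu\nearrow\rho\nearrow\Dc\la$ must consist of $\Dc\mu$ plus one of these two boxes. Next, case B: appending a new part $1$ to $\mu$ adds the Frobenius pair $(1\mid 0)$, i.e. a single new box, which sits on the diagonal of $\Dc\la$ (it is the unique box whose Frobenius arm and leg are the added $1$ and $0$). So here $|\Dc\la|=|\Dc\mu|+1$ and there is a unique intermediate diagram, namely $\rho=\Dc\la$ itself with the understanding that the single step $\Dc\mu\nearrow\Dc\la$ already works; the lemma's phrasing ``only one such ordinary diagram $\rho$'' then just records that $\Dc\mu\nearrow\Dc\la$ directly. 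I should double-check the edge case that this diagonal box can always be legally added (it can, since $\ell(\mu)$ rows already have length $\ge 1$ in the shifted picture, making the doubled shape admit the new diagonal box).

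The counting statement then follows by induction on $|\la|$ along the branching $\varnothing\Nearrow\cdots\Nearrow\la$ in the Schur graph. A $\Dc$-path from $\varnothing$ to $\Dc\la$ is, by definition, determined by choosing for each Schur-graph step $\mu\Nearrow\la'$ an ordering of the newly added boxes of $\Dc\la'$ over $\Dc\mu$: in case A there are $2$ orderings (the two mirror boxes), and in case B there is $1$. Hence the number of $\Dc$-paths to $\Dc\la$ equals $g_\la$ (the number of Schur-graph paths to $\la$) times $2^{\#\{\text{case-A steps along the path}\}}$. But every Schur-graph path $\varnothing\Nearrow\cdots\Nearrow\la$ uses exactly $\ell(\la)$ case-B steps (one for the creation of each of the $\ell(\la)$ parts — each step of type B increments $\ell$ by one and type A leaves it unchanged, and $\ell$ goes from $0$ to $\ell(\la)$) and therefore $|\la|-\ell(\la)$ case-A steps, \emph{independently of the path}. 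Thus the exponent of $2$ is the path-independent quantity $|\la|-\ell(\la)$, and the total count is $2^{|\la|-\ell(\la)}g_\la$, as claimed.

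The only genuinely delicate point is the local analysis in case A: one must verify that after increasing $\mu_i$, the two new boxes of $\Dc\la$ are indeed a symmetric pair lying strictly off the diagonal and that \emph{both} orders of insertion pass through a legitimate (non-$\Dc$-symmetric) ordinary Young diagram — this uses that $\la$ is strict, so $\la_{i-1}>\la_i>\la_{i+1}$, which is exactly what guarantees row $i$ can be extended and column $i$ can be extended without violating the decreasing-row-length condition. The combinatorics of the diagonal in case B, and the bookkeeping that every maximal chain in $\Sb$ up to $\la$ has exactly $\ell(\la)$ type-B steps, are then routine. I expect no further obstacles.
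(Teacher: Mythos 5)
Your case A analysis is correct, and the final counting argument is sound in structure, but case B contains a genuine error. You assert that appending the new part $1$ adds the Frobenius pair $(1\mid 0)$ ``i.e.\ a single new box,'' and conclude $|\Dc\la|=|\Dc\mu|+1$ so that the lemma ``just records that $\Dc\mu\nearrow\Dc\la$ directly.'' This contradicts the definition of the doubling: $|\Dc\la|=2|\la|=2|\mu|+2=|\Dc\mu|+2$ in \emph{every} case, and indeed a Frobenius pair $(a\mid b)$ contributes $a+b+1$ boxes, so $(1\mid 0)$ contributes $2$ boxes, not $1$. Concretely, writing $\ell=\ell(\la)=\ell(\mu)+1$, the skew shape $\Dc\la\setminus\Dc\mu$ in case B consists of the diagonal box $(\ell,\ell)$ and the box $(\ell,\ell+1)$ immediately to its right. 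Since these lie in the same row, only one order of insertion yields a Young diagram at the intermediate step, namely the diagonal box first; hence the unique $\rho$ is $\Dc\mu$ together with the box $(\ell,\ell)$, and the chain $\Dc\mu\nearrow\rho\nearrow\Dc\la$ genuinely has two steps. The statement ``$\Dc\mu\nearrow\Dc\la$'' that you substitute for it is false.

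The error is localized and repairable: once case B is corrected as above, each step of a Schur-graph path contributes exactly two boxes to the corresponding segment of the $\Dc$-path (so the lengths match, $2n$ steps in $\Yb$ for $n$ steps in $\Sb$), with $2$ admissible orderings in case A and $1$ in case B, and your bookkeeping --- every saturated chain $\varnothing\Nearrow\cdots\Nearrow\la$ has exactly $\ell(\la)$ type-B steps and $|\la|-\ell(\la)$ type-A steps, independently of the chain --- then gives $2^{|\la|-\ell(\la)}g_\la$ as claimed. Note that under your misreading of case B the $\Dc$-path would have length $2|\la|-\ell(\la)$ rather than $2|\la|$, which is incompatible with it being a path from $\varnothing$ to $\Dc\la\in\Yb_{2|\la|}$; the fact that your final count still comes out right is a coincidence of ``one choice either way.'' (The paper itself offers no proof, calling the lemma straightforward, so there is no argument to compare against; the above is simply the correct local analysis.)
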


Note that the ambient structure of $\Dc$-paths in $\Yb$ defines certain edge multiplicities in $\Dc\Sb\subset\Yb$ (and, therefore, in $\Sb$): there are either one (if $\ell(\la)=\ell(\mu)+1$) or two (if $\ell(\la)=\ell(\mu)$) edges between shifted diagrams $\mu\Nearrow\la$. However, these new edge multiplicities give rise to the same down transition probabilities $p^\Da$ on $\Sb$ as before. Following \cite{Kerov1989}, one can say that the new edge multiplicities are \emph{equivalent} to the old ones.

\begin{prop}\label{prop:down_equality}
  In notation of Lemma \ref{lemma:number_of_rho}, if $\ell(\la)=\ell(\mu)$, one has $p^\Da(\la,\mu)= p^\da(\Dc\la,\rho^{(a)})+p^\da(\Dc\la,\rho^{(b)})$, and if $\ell(\la)=\ell(\mu)+1$, then $p^\Da(\la,\mu)= p^\da(\Dc\la,\rho)$.
\end{prop}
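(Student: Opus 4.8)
The plan is to reduce both sides to explicit products of hook-length factors and then verify the identity head-on; I do not see a coherence-type shortcut, but the ``hook-walk'' reformulation keeps the bookkeeping under control. Writing $n=|\la|$, we have $p^\Da(\la,\mu)=g_\mu/g_\la$ and $p^\da(\Dc\la,\rho)=f_\rho/f_{\Dc\la}$, and from the hook-length formula $f_\nu=|\nu|!/\prod_{\square}h_\square(\nu)$ one gets at once, for any removable box $\square$ of $\nu$,
\[
  p^\da(\nu,\nu\setminus\square)=\frac1{|\nu|}\prod_{\square'}\frac{h_{\square'}(\nu)}{h_{\square'}(\nu)-1},
\]
the product being over the cells $\square'\ne\square$ in the same row or column as $\square$. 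The first step is to pin down, via the doubling, which corners occur. If $\ell(\la)=\ell(\mu)$, say $\la_k=\mu_k+1$, then $\Dc\la\setminus\Dc\mu$ consists of the two corner cells $\square_1=(k,k+\la_k)$ (content $\la_k$) and $\square_2=(k+\la_k-1,k)$ (content $1-\la_k$), and $\rho^{(a)}=\Dc\la\setminus\square_2$, $\rho^{(b)}=\Dc\la\setminus\square_1$. If $\ell(\la)=\ell(\mu)+1$, then $\la=(\mu_1,\dots,\mu_\ell,1)$, $\Dc\la\setminus\Dc\mu=\{(\ell+1,\ell+1),(\ell+1,\ell+2)\}$, of which only $(\ell+1,\ell+2)$ is a corner, and $\rho=\Dc\la\setminus(\ell+1,\ell+2)$.

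Next I would record the relevant hook lengths of $\Dc\la$. Since $\Dc\la$ has Frobenius coordinates $(\la_1,\dots,\la_\ell\mid\la_1-1,\dots,\la_\ell-1)$, a short computation (most transparently via the abacus reading of hook lengths) gives $h_{(k,j)}(\Dc\la)=h_{(j,k)}(\Dc\la)=\la_k+\la_j$ for $j\le\ell$ (so the diagonal hook is $2\la_k$), while the hook lengths of the cells $(k,j)$ of row $k$ with $j>\ell$ form the set $\{1,2,\dots,\la_k\}\setminus\{\la_k-\la_j:k<j\le\ell\}$ and those of the cells $(i,k)$ of column $k$ with $i>\ell$ form $\{1,2,\dots,\la_k-1\}\setminus\{\la_k-\la_j:k<j\le\ell\}$; finally $h_{(i,k+\la_k)}(\Dc\la)=h_{(k+\la_k-1,i)}(\Dc\la)=\la_i-\la_k+1$ for $i<k$. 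In the other case one gets $h_{(\ell+1,j)}(\Dc\la)=\mu_j+1$ and $h_{(i,\ell+2)}(\Dc\la)=\mu_i$ for $i,j\le\ell$, and $h_{(\ell+1,\ell+1)}(\Dc\la)=2$.

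Substituting these into the hook-walk formula and using the telescoping identity $\prod_{h=2}^N\frac{h}{h-1}=N$ on each row/column ``staircase'' of hooks (with the indicated gaps deleted), the case $\ell(\la)=\ell(\mu)$ collapses to $p^\da(\Dc\la,\rho^{(b)})=\frac{\la_k}{2n}P$ and $p^\da(\Dc\la,\rho^{(a)})=\frac{\mu_k}{2n}P$ with the \emph{same} product $P=\prod_{j=1}^\ell\frac{\la_k+\la_j}{\la_k+\la_j-1}\cdot\prod_{k<j\le\ell}\frac{\la_k-\la_j-1}{\la_k-\la_j}\cdot\prod_{i<k}\frac{\la_i-\la_k+1}{\la_i-\la_k}$; adding them gives $\frac{2\la_k-1}{2n}P$. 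On the other hand, expanding $g_\mu/g_\la$ from $g_\la=\frac{n!}{\prod_i\la_i!}\prod_{i<j}\frac{\la_i-\la_j}{\la_i+\la_j}$ (only the pairs containing the index $k$ change, since $\la_i=\mu_i$ for $i\ne k$ and $\la_k=\mu_k+1$) and splitting off the factor $\frac{2\la_k}{2\la_k-1}$, one lands exactly on $\frac{2\la_k-1}{2n}P$. In the case $\ell(\la)=\ell(\mu)+1$ both $p^\da(\Dc\la,\rho)$ and $g_\mu/g_\la$ reduce, the same way, to $\frac1n\prod_{j=1}^\ell\frac{\mu_j+1}{\mu_j-1}$.

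The hard part is the hook-length bookkeeping of the middle paragraph: establishing the two ``staircase minus a gap set'' descriptions of the hook lengths of $\Dc\la$ beyond the $\ell$-th cell of row $k$ and of column $k$ (this is where the interplay between the doubling and the parts $\la_j$ sits), and then, in the equal-length case, observing that the two corner contributions differ only in the prefactor $\la_k$ versus $\mu_k$, so that their sum yields precisely the $2\la_k-1$ that matches the $\frac{2\la_k}{2\la_k-1}$-corrected ratio of the $g$'s. None of this is deep, but it does need care. (An alternative is to first prove $f_{\Dc\la}=\binom{2n}{n}2^{-\ell(\la)}g_\la^2$ — equivalently, that the shifted hook lengths of $\la$ are the hook lengths of the positive-content cells of $\Dc\la$ — and then compute $f_{\rho^{(a)}},f_{\rho^{(b)}},f_\rho$ relative to $f_{\Dc\mu}$; this is of comparable length.)
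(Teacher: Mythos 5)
Your argument is correct, but it is a genuinely different proof from the one in the paper, which involves no hook lengths at all. The paper runs the down chain from $\Dc\la$, which induces the uniform measure on monotone paths $\varnothing\nearrow\dots\nearrow\Dc\la$, conditions on the event $D$ that the path is a $\Dc$-path, and notes that such a path passes through $\Dc\mu$ iff it passes through $\rho^{(a)}$ or $\rho^{(b)}$; combined with the count $2^{|\la|-\ell(\la)}g_\la$ of $\Dc$-paths from Lemma \ref{lemma:number_of_rho} (which gives $\prob(D)$) and the independence of $\{\boldsymbol\varrho_1=\rho^{(a),(b)}\}$ from $D$, this yields the identity in a few lines. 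You instead verify the identity head-on from the hook length formula for $f$ and the product formula $g_\la=\frac{n!}{\prod_i\la_i!}\prod_{i<j}\frac{\la_i-\la_j}{\la_i+\la_j}$. I checked your hook-length inventory of $\Dc\la$ (the hooks $\la_k+\la_j$ at the cells $(k,j)$ and $(j,k)$ with $j\le\ell$; the staircase-minus-gaps tails of row $k$ and column $k$; the values $\la_i-\la_k+1$ above and to the left of the two new corners) and the resulting reductions to $\frac{\la_k}{2n}P$, $\frac{\mu_k}{2n}P$, hence $\frac{2\la_k-1}{2n}P$, matching the $\frac{2\la_k}{2\la_k-1}$-corrected ratio of the $g$'s, as well as the degenerate case $\frac1n\prod_j\frac{\mu_j+1}{\mu_j-1}$: all of this is correct and the two sides do match. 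What your route buys is explicit product formulas for every probability involved and, implicitly, the classical identity $f_{\Dc\la}=\binom{2n}{n}2^{-\ell(\la)}g_\la^{2}$, which is essentially the content of Lemma \ref{lemma:number_of_rho}; what it costs is exactly the bookkeeping you flag in your last paragraph. To make the write-up complete you must actually establish, or cite (e.g.\ via the standard correspondence between the shifted hooks of $\la$ and the hooks of $\Dc\la$ on and above the main diagonal, \cite[III.8]{Macdonald1995}), the ``staircase minus a gap set'' descriptions, which as written are asserted rather than proved. The paper's conditioning argument avoids all of this and is the shorter and more conceptual of the two.
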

\begin{proof}
  Fix $\la\in\Sb_n$ and $\mu\in\Sb_{n-1}$ such that $\mu\Nearrow\la$. Assume that $\ell(\la)=\ell(\mu)$, the other case is similar. Down transition probabilities $p^\da$ on the Young graph allow to define a Markov chain going down from $\Dc\la$, i.e., a sequence of random ordinary Young diagrams $\Dc\la=\boldsymbol\varrho_0\searrow\boldsymbol\varrho_1\searrow\ldots\searrow\boldsymbol\varrho_{2n}= \varnothing$. For each $k$, the conditional distribution of $\boldsymbol\varrho_k$ given $\boldsymbol\varrho_{k-1}$ is governed by the transition kernel $p^\da$ from $\Yb_{2n-k+1}$ to $\Yb_{2n-k}$. In other words, this gives a measure on the set of all paths in $\Yb$ from $\varnothing$ to $\Dc\la$. By the very definition  of $p^\da$ (\ref{down_Y}), this measure is uniform over all such paths. Let $D$ denote the event that the path $(\boldsymbol\varrho_{2n}\nearrow\ldots\nearrow\boldsymbol\varrho_{0})$ from $\varnothing$ to $\Dc\la$ is a $\Dc$-path. Conditioning on the event $D$, we have a uniform measure over $\Dc$-paths. One clearly has
  \begin{equation}\label{down_p_proof}
    \prob(\boldsymbol\varrho_2=\Dc\mu,D)=
    \prob(\boldsymbol\varrho_1=\rho^{(a)},D)+
    \prob(\boldsymbol\varrho_1=\rho^{(b)},D). 
  \end{equation}
  In the left-hand side one has $(f_{\Dc\la})^{-1}$ times the number of $\Dc$-paths in $\Yb$ from $\varnothing$ to $\Dc\la$ which also go through $\Dc\mu$, and in the right-hand side the events $\{\boldsymbol\varrho_1=\rho^{(a),(b)}\}$ are independent of $D$, and $\prob(\boldsymbol\varrho_1=\rho^{(a),(b)})= p^\da(\Dc\la,\rho^{(a),(b)})$. Dividing (\ref{down_p_proof}) by $\prob(D)=2^{|\la|-\ell(\la)}g_\la/f_{\Dc\la}$, we get the desired identity.
\end{proof}

\section{Plancherel up transition probabilities}

Here we describe an identity for the Plancherel up transition probabilities $p^\ua_{\Ply}$ and $p^\Ua_{\Pls}$ which is ``dual'' to Proposition \ref{prop:down_equality}. The proof uses Kerov's interlacing coordinates of ordinary and shifted Young diagrams \cite{Kerov2000}, \cite{Olshanski2009}, \cite{petrov2009eng}. Let us recall necessary definitions and facts from these papers. 

For an ordinary Young diagram $\rho$, by $\xy_1,\dots,\xy_d$ and $\yy_1,\dots,\yy_{d-1}$ denote the contents of all boxes that can be added to or removed from $\rho$, respectively. It is known that these numbers interlace ($\xy_1<\yy_1<\xy_2<\dots<\yy_{d-1}<\xy_d$) and define $\rho$ uniquely. For example, for $\rho=(4,4,1)$ (see (\ref{YS_diagrams})) one has $d=3$, $(\xy_1,\xy_2,\xy_3)=(-3,-1,4)$, and $(\yy_1,\yy_2)=(-2,2)$. The Plancherel up transition probabilities for $\Yb$ arise as the following coefficients in the expansion as a sum of partial fractions:
\begin{equation*}
  \Rc^\ua(u;\rho):=\frac{(u-\yy_1)\dots(u-\yy_{d-1})}
  {(u-\xy_1)\dots(u-\xy_{d-1})(u-\xy_d)}=
  \sum_{s=1}^d \frac{p^\ua_{\Ply}(\rho;\rho+
  \fbox{$\xy_s$})}{u-\xy_s}.
\end{equation*}
Here $\rho+\fbox{$\xy_s$}$ means that we add to $\rho$ a box with content $\xy_s$.

The case of shifted diagrams is slightly more complicated, and in full detail it is explained in \cite[\S3]{petrov2009eng} (the arXiv version). Let $\la$ be a shifted Young diagram. Let $\ys_1,\dots,\ys_{k}$ denote the contents of all boxes that can be removed from $\la$. Let $\xs_1,\dots,\xs_k$ denote all the \emph{nonzero} contents of all boxes that can be added to $\la$. These contents also interlace ($\ys_1<\xs_1<\ys_2<\dots<\ys_{k}<\xs_k$) and define $\la$ uniquely. For example, for $\la=(5,3,2)$ (see (\ref{YS_diagrams})) one has $k=2$, $(\xs_1,\xs_2)=(3,5)$, and $(\ys_1,\ys_2)=(1,4)$. The Plancherel up transition probabilities for $\Sb$ arise as the following expansion coefficients:
\begin{equation*}
  \Rc^\Ua(v;\la):=
  \frac{(v-\ys_1(\ys_1+1))\dots(v-\ys_k(\ys_k+1))}
  {v(v-\xs_1(\xs_1+1))\dots(v-\xs_k(\xs_k+1))}
  =\sum_{\xs}
  \frac{p^\Ua_{\Pls}(\la;\la+\fbox{$\xs$})}
  {v-\xs(\xs+1)},
\end{equation*}
where the sum is taken over \emph{all} boxes which can be added to $\la$, and $\xs$ is the content of such a box (here it does not have to be nonzero).

The next fact is readily checked:
\begin{prop}\label{prop:up_Planch}
  For any $\la\in\Sb$, one has $(u-1)\cdot\Rc^\Ua(u(u-1);\la)=\Rc^\ua(u;\Dc\la)$. Consequently, in notation of Lemma \ref{lemma:number_of_rho}, $p^\Ua_{\Pls}(\mu,\la)= p^\ua_\Ply(\Dc\mu,\rho^{(a)})+p^\ua_\Ply(\Dc\mu,\rho^{(b)})$ for  $\ell(\la)=\ell(\mu)$, and $p^\Ua_{\Pls}(\mu,\la)= p^\ua_\Ply(\Dc\mu,\rho)$ otherwise.
\end{prop}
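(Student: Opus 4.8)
The plan is to prove the rational-function identity $(u-1)\cdot\Rc^\Ua(u(u-1);\la)=\Rc^\ua(u;\Dc\la)$ directly by comparing the numerator and denominator of both sides after the substitution $v=u(u-1)$, and then to read off the claimed relation between the up transition probabilities by taking residues. The key combinatorial input is the description of how the interlacing coordinates of $\Dc\la$ relate to those of $\la$. First I would record this: if $\la$ has removable-box contents $\ys_1<\dots<\ys_k$ and nonzero addable-box contents $\xs_1<\dots<\xs_k$, then the addable/removable boxes of the doubled diagram $\Dc\la$ come in two symmetric families. Because $\Dc\la$ is symmetric about the main diagonal (content $0$), a box addable to $\Dc\la$ with content $c>0$ is mirrored by one with content $-(c+1)$ (and analogously for removable boxes and content $\leftrightarrow -(\text{content}+1)$, i.e. the symmetry $u\mapsto 1-u$), while the diagonal itself contributes one extra addable box of content $0$. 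Concretely: the addable contents of $\Dc\la$ are $\{\,\xs_j,\ -\xs_j-1 : j=1,\dots,k\,\}\cup\{0\}$, so $d=2k+1$; and the removable contents of $\Dc\la$ are $\{\,\ys_j,\ -\ys_j-1 : j=1,\dots,k\,\}$, which is $2k=d-1$ numbers, as it must be. This is exactly the content of the $\ell(\la)=\ell(\mu)$ vs.\ $\ell(\la)=\ell(\mu)+1$ dichotomy in Lemma~\ref{lemma:number_of_rho}, now read at the level of all removable boxes at once.

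Granting this, the identity is a bookkeeping computation. Under $v=u(u-1)$ one has $v-\xs(\xs+1)=u(u-1)-\xs(\xs+1)=(u-\xs-1)(u+\xs)=\big(u-\xs\big)'$-type factorization; precisely $u(u-1)-a(a+1)=(u-a-1)(u+a)$. Applying this to $a=\xs_j$ gives the pair of linear factors $(u-\xs_j-1)(u-(-\xs_j))$ in the denominator of $\Rc^\ua(u;\Dc\la)$, i.e.\ exactly $(u-\xy_s)$ over the two mirror addable contents; the isolated factor $v$ in the denominator of $\Rc^\Ua$ becomes $u(u-1)$, contributing the factor $(u-0)$ for the diagonal addable box together with the spare $(u-1)$ — which is precisely the $(u-1)$ multiplying the left-hand side, so after multiplying through it combines into the single denominator factor $u$. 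The same substitution applied to $a=\ys_j$ turns each numerator factor $v-\ys_j(\ys_j+1)$ into $(u-\ys_j-1)(u+\ys_j)$, matching the two mirror removable contents of $\Dc\la$. Thus both sides are the same ratio of monic polynomials in $u$, of degrees $2k$ over $2k+1$, with matching roots; this proves the first assertion.

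For the second assertion I would take residues. The coefficient $p^\ua_\Ply(\Dc\la;\Dc\la+\fbox{$\xy_s$})$ is the residue of $\Rc^\ua(u;\Dc\la)$ at $u=\xy_s$. Pairing the addable box of content $\xs$ (content $\xs(\xs+1)$ after the change of variable) with its mirror of content $-\xs-1$, and using $\frac{du}{dv}=\frac1{2u-1}$, the residue of $(u-1)\Rc^\Ua(u(u-1);\la)$ at $u=\xs$ (resp. $u=-\xs-1$) reproduces $p^\Ua_{\Pls}(\la;\la+\fbox{$\xs$})$ up to the Jacobian factor, and summing the two mirror residues exactly cancels that factor — this is the source of the ``$\rho^{(a)}$ plus $\rho^{(b)}$'' in the two-edge case. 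In the one-edge case ($\ell(\la)=\ell(\mu)+1$), the relevant addable box of $\la$ has content $0$, which is the fixed point of the symmetry $u\mapsto 1-u$ sitting at the diagonal of $\Dc\la$; there is a single preimage $\rho=\Dc\mu+\fbox{$0$}$, and the residue computation gives the plain equality $p^\Ua_{\Pls}(\mu,\la)=p^\ua_\Ply(\Dc\mu,\rho)$. The main obstacle is none of the algebra, which is routine, but rather pinning down the interlacing-coordinate dictionary for $\Dc\la$ cleanly — in particular checking that a box is addable to $\Dc\la$ with content $\xs>0$ if and only if $\xs$ is an addable content of $\la$, and being careful about the exceptional role of the content-$0$ diagonal box and of the possibly-zero addable content of $\la$ that the definition of the $\xs_j$ deliberately excludes. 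Once that dictionary is stated correctly, everything else is forced.
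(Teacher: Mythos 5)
Your overall strategy---substitute $v=u(u-1)$, factor each quadratic $u(u-1)-a(a+1)=(u-a-1)(u+a)$, match zeros and poles against the interlacing coordinates of $\Dc\la$, then sum residues over the two preimages of each pole---is exactly what the paper's ``readily checked'' amounts to, and your residue bookkeeping (the two residues carry weights $\frac{u_i-1}{2u_i-1}$ over the fiber $\{u_1,u_2\}$ with $u_1+u_2=1$, which sum to $1$) is sound. However, the combinatorial dictionary that you yourself single out as ``the key combinatorial input'' and ``the main obstacle'' is stated incorrectly, and the error is not cosmetic: it contradicts the algebra you then perform. You claim the addable contents of $\Dc\la$ are $\{\xs_j,\,-\xs_j-1\}\cup\{0\}$ and the removable ones are $\{\ys_j,\,-\ys_j-1\}$, i.e.\ the mirror map is $c\mapsto -c-1$. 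The correct statement is that they are $\{\xs_j+1,\,-\xs_j\}\cup\{0\}$ and $\{\ys_j+1,\,-\ys_j\}$, i.e.\ the mirror map is $c\mapsto 1-c$ (which \emph{is} the involution $u\mapsto 1-u$ you name, whereas $c\mapsto -c-1$ is $u\mapsto -u-1$). Check against the paper's own example $\la=(4,2)$, $\Dc\la=(5,4,2,1)$: the nonzero addable contents of $\la$ are $\{2,4\}$, while the addable contents of $\Dc\la$ are $\{5,3,0,-2,-4\}=\{\xs_j+1\}\cup\{-\xs_j\}\cup\{0\}$, not $\{2,4,-3,-5,0\}$. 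Your factorization produces roots at $\xs_j+1$ and $-\xs_j$; under your stated dictionary these would not be poles of $\Rc^\ua(u;\Dc\la)$ and the two sides would fail to match, so the lemma must be corrected before ``everything else is forced.''

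A second, smaller gap: the counts ``$d=2k+1$ addable, $2k$ removable'' fail when $\la_{\ell(\la)}=1$, equivalently $\ys_1=0$ (e.g.\ $\la=(1)$, $\Dc\la=(2)$ has $2$ addable and $1$ removable box while $k=1$). There the orbit $\{1-0,\,-0\}=\{1,0\}$ degenerates: content $0$ is not addable to $\Dc\la$, only $\ys_1+1=1$ survives as a removable content, and in the identity the numerator factor $v-\ys_1(\ys_1+1)=u(u-1)$ cancels the denominator's $v$. The identity still holds, but your dictionary does not cover this case; you flag the excluded zero \emph{addable} content of $\la$ but not the possibly zero \emph{removable} content. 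Relatedly, in the one-edge case the addable content $0$ of $\mu$ is not a fixed point of $u\mapsto 1-u$ (the fixed point is $1/2$); rather $\{0,1\}$ is a two-point orbit, and it is the prefactor $(u-1)$ that kills the pole at $u=1$, leaving the single pole at $u=0$ matching the unique intermediate diagram $\rho=\Dc\mu+\fbox{$0$}$. With these corrections your argument goes through.
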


Proposition \ref{prop:down_equality} can also be proved using the above rational functions because the down transition probabilities essentially arise as coefficients of expansions of $1/\Rc^\ua(u;\rho)$ and $1/(v\cdot \Rc^\Ua(v;\la))$.

\section{Up transition probabilities for $\My_n^{z,z'}$ and $\Ms_n^\al$}
\label{section:up_z_al}

By suitable choice of the parameters $z,z'$ of the $z$-measures on ordinary partitions, one can get an analogue of Proposition \ref{prop:up_Planch} for the deformed coherent systems $\My_n^{z,z'}$ and $\Ms_n^\al$, which is the main result of the present note. Set $\nu(\al):=\frac12\sqrt{1-4\al}$. From (\ref{up_Z}), (\ref{up_mult}) and Proposition \ref{prop:up_Planch} we have:

\begin{prop}\label{prop:up_Z}
  Let $z(\al)=\nu(\al)-\frac12$, $z'(\al)=-\nu(\al)-\frac12$ (note that these parameters are admissible for the $z$-measures). In notation of Lemma \ref{lemma:number_of_rho}, for $\ell(\la)=\ell(\mu)$ one has $p^\Ua_{\al}(\mu,\la)= p^\ua_{z(\al),z'(\al)}(\Dc\mu,\rho^{(a)})+p^\ua_{z(\al),z'(\al)}(\Dc\mu,\rho^{(b)})$, and if $\ell(\la)=\ell(\mu)+1$, then one has $p^\Ua_{\al}(\mu,\la)= p^\ua_{z(\al),z'(\al)}(\Dc\mu,\rho)$.
\end{prop}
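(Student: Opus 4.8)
The plan is to deduce the statement from the closed formulas (\ref{up_Z}) and (\ref{up_mult}) together with Proposition \ref{prop:up_Planch}, so that the whole content becomes a coincidence between scalar prefactors. The starting observation is purely arithmetic: writing $z=z(\al)$, $z'=z'(\al)$ one has
\[
  z+z'=-1,\qquad zz'=\tfrac14-\nu(\al)^2=\al,
\]
and (as the statement already notes) these $z,z'$ are admissible, since $z'=\bar z\notin\Z$ when $\al>\tfrac14$ and $-1<z,z'<0$ when $0<\al\le\tfrac14$. The consequence I will use is that the quadratic $t\mapsto(z+t)(z'+t)=t^2-t+\al$ is invariant under $t\mapsto 1-t$ and takes the value $\c(\c+1)+\al$ at both $t=\c+1$ and $t=-\c$.

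Next I would identify, for $\mu\Nearrow\la$, the contents of the boxes added to $\Dc\mu$ inside $\Dc\la$. Write $\c:=\c(\la\setminus\mu)$ for the content of the box $\la\setminus\mu$ in the shifted diagram $\la$ (so $\c\ge0$, and $\c=0$ precisely when $\ell(\la)=\ell(\mu)+1$). I claim that if $\ell(\la)=\ell(\mu)$, then $\rho^{(a)}\setminus\Dc\mu$ and $\rho^{(b)}\setminus\Dc\mu$ have contents $\c+1$ and $-\c$ in one order or the other, while if $\ell(\la)=\ell(\mu)+1$, then $\rho\setminus\Dc\mu$ has content $0$ (and $\Dc\la\setminus\rho$ has content $1$). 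This is a direct computation with the Frobenius coordinates of $\Dc\mu$ and $\Dc\la$ from Lemma \ref{lemma:number_of_rho}; equivalently, it is what the substitution $v=u(u-1)$ of Proposition \ref{prop:up_Planch} records, since the two addable boxes of $\Dc\mu$ sitting inside $\Dc\la$ carry the contents $u$ solving $u(u-1)=\c(\c+1)$, namely $u=\c+1$ and $u=-\c$. Together with $|\Dc\mu|=2|\mu|$, this is all the combinatorial input needed.

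Finally I would assemble the identity. Suppose $\ell(\la)=\ell(\mu)$: applying (\ref{up_Z}) to each of $\rho^{(a)},\rho^{(b)}$, the scalar factor $\frac{(z+t)(z'+t)}{zz'+|\Dc\mu|}$ equals $\frac{\c(\c+1)+\al}{2|\mu|+\al}$ for \emph{both} relevant values $t=\c+1$ and $t=-\c$ by the first paragraph, hence it pulls out of the sum, leaving $p^\ua_{\Ply}(\Dc\mu,\rho^{(a)})+p^\ua_{\Ply}(\Dc\mu,\rho^{(b)})=p^\Ua_{\Pls}(\mu,\la)$ by Proposition \ref{prop:up_Planch}, and comparison with (\ref{up_mult}) closes this case. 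Suppose now $\ell(\la)=\ell(\mu)+1$: then the single box $\rho\setminus\Dc\mu$ has content $0$, so (\ref{up_Z}) produces the factor $\frac{zz'}{zz'+2|\mu|}=\frac{\al}{2|\mu|+\al}$, which is exactly the prefactor of (\ref{up_mult}) at $\c=0$, and together with $p^\ua_{\Ply}(\Dc\mu,\rho)=p^\Ua_{\Pls}(\mu,\la)$ from Proposition \ref{prop:up_Planch} this gives the claim. I do not expect a genuine obstacle here: the only step that requires care is the bookkeeping in the second paragraph—making sure the two doubled boxes really carry contents $\c+1$ and $-\c$ and keeping shifted versus ordinary contents straight—after which the proposition is forced by the algebraic coincidence $z(\al)+z'(\al)=-1$, $z(\al)z'(\al)=\al$.
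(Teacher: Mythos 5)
Your proposal is correct and is precisely the argument the paper intends: the paper derives Proposition \ref{prop:up_Z} directly from (\ref{up_Z}), (\ref{up_mult}) and Proposition \ref{prop:up_Planch} without writing out the details, and you have supplied exactly those details (the identities $z+z'=-1$, $zz'=\al$, the contents $\c+1$ and $-\c$ of the two doubled boxes, and $|\Dc\mu|=2|\mu|$). Nothing further is needed.
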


  Now one can explain how the random growth processes for the measures $\Ms^\al_n$ and $\My_n^{z(\al),z'(\al)}$ are related. Indeed, to grow a random \emph{shifted} Young diagram $\boldsymbol\la$ with $n$ boxes distributed according to $\Ms^\al_n$, one should start the growth process on the \emph{Young graph} from $\varnothing$ which evolves as follows:

  $\bullet$ at each \emph{even} step add a box to the ordinary diagram according to the probabilities $p^\ua_{z(\al),z'(\al)}$ (this is a random procedure);

  $\bullet$ at each \emph{odd} step add the unique box to the current ordinary diagram so that it again becomes $\Dc$-symmetric (this is a deterministic procedure).
  
  In this way the growth process on the Young graph goes along a $\Dc$-symmetric path, and after $2n$ steps it reaches a random ordinary Young diagram $\Dc\boldsymbol\la\in\Yb_{2n}$, where $\boldsymbol\la\in\Sb_n$ is distributed according to $\Ms^\al_n$. One may call this the \emph{forced $\Dc$-symmetrization} of the old growth process (\ref{up_Z}) on the Young graph: the growing ordinary Young diagram is forced to be $\Dc$-symmetric at every step at which it is possible.

\section{Schur measures and an analogue for shifted diagrams}

Both families of measures that we consider can be interpreted through certain specializations of Schur symmetric functions $s_\tau$, $\tau\in\Yb$ \cite[I.3]{Macdonald1995}. For the $z$-measures one has \cite{okounkov2001infinite}
\begin{equation*}
  \My_{n}^{z,z'}(\rho)=
  \frac{n!}{(zz')_n}
  s_\rho(\underbrace{1,\dots,1}_{z\text{ times}})
  s_\rho(\underbrace{1,\dots,1}_{z'\text{ times}}),
  \qquad \rho\in\Yb_n
\end{equation*}
(here $(\cdots)_n$ denotes the Pochhammer symbol). For the measures $\Ms_n^\al$ one can show that (see also \cite[\S2.6]{Petrov2010})
\begin{equation*}
  \Ms_n^\al(\la)=
  \frac{(-1)^nn!}{(\al/2)_n}
  s_{\Dc\la}
  (\underbrace{1,1,\dots,1,1}_{\nu(\al)-\frac12\text{ times}}),
  \qquad \la\in\Sb_n.
\end{equation*}
Such measures were first considered in \cite[Thm 7.1]{Rains2000}. 

From the above two formulas one sees that the weights $\{\Ms_n^\al(\la)\}_{\la\in\Sb_n}$ are proportional (with a coefficient depending only on $n$) to square roots of the weights $\{\My_{2n}^{z(\al),z'(\al)}(\Dc\la)\}_{\la\in\Sb_n}$. (Alternatively, this can be seen from \S\ref{section:up_z_al} and the multiplicative nature of our measures \cite{rozhkovskaya1997multiplicative}, \cite{Borodin1997}.) This property can easily be reformulated in probabilistic terms, but it does not seem to provide a direct way of obtaining properties of $\Ms_n^\al$ from the corresponding properties of the $z$-measures.

\providecommand{\bysame}{\leavevmode\hbox to3em{\hrulefill}\thinspace}
\providecommand{\MR}{\relax\ifhmode\unskip\space\fi MR }
\providecommand{\MRhref}[2]{%
  \href{http://www.ams.org/mathscinet-getitem?mr=#1}{#2}
}
\providecommand{\href}[2]{#2}

\end{document}